\newlength{\defbaselineskip}
\newcommand{\setlinespacing}[1]%
           {\setlength{\baselineskip}{#1 \defbaselineskip}}
\newcommand{\N}{{\mathbb{N}}}
\newcommand{\actaqed}{\hfill $\actabox$}
{\medskip\noindent \textit{Proof of #1. }}%
{\actaqed \medskip}
\def\D{{\mathcal D}}
\def\K{{\mathcal K}}
\def\L{{\mathcal L}}
\def\R{{\mathbb R}}
\def \<{\langle}
\def\>{\rangle}
\def \e{\epsilon}
\def \sp{\operatorname{span}}
\def\la{\lambda}
\newtheorem{Theorem}{Theorem}[section]
\newtheorem{Lemma}{Lemma}[section]
\newtheorem{Remark}{Remark}[section]
\newtheorem{Corollary}{Corollary}[section]
\numberwithin{equation}{section}
\begin{document}
\title{{An inequality for the entropy numbers and its application} }
\author{V.N. Temlyakov \thanks{ University of South Carolina and Steklov Institute of Mathematics. Research was supported by NSF grant DMS-1160841}} \maketitle
\begin{abstract}
{We prove an inequality for the entropy numbers in terms of nonlinear Kolmogorov's widths. This inequality is in a spirit of known inequalities of this type and it is adjusted to the form convenient in applications for $m$-term approximations with respect to a given system. Also, we obtain upper bounds for the $m$-term approximation by the Weak Relaxed Greedy Algorithm with respect to a system which is not a dictionary.  }
\end{abstract}

\section{Introduction}

This paper was motivated by the very recent paper \cite{GIY}. The authors of \cite{GIY} study the entropy and best $m$-term approximation of the $\ell_q$-hulls of finite systems of elements in the $L_p$ spaces. They conduct this study by probabilistic methods. In this context probabilistic methods were used in some earlier papers, for instance, in \cite{DGDS}. Here we demonstrate how known results from greedy approximation in Banach spaces combined with known technique of general inequalities for the entropy numbers allow us to obtain similar results. Moreover, we show that the use of a greedy algorithm allows us to provide a deterministic construction of good $m$-term approximants. 

A number of different widths are being studied in approximation theory: 
Kolmogorov widths, linear widths, Fourier widths, Gel'fand widths, Alexandrov widths
and others. All these widths were introduced in approximation theory 
as characteristics of function classes (more generally compact sets) which give
 the best possible accuracy of algorithms with certain restrictions.
For instance, Kolmogorov's $n$-width for centrally symmetric compact set
$F$ in a Banach space $X$ is defined as follows
$$
d_n(F,X) := \inf_L \sup_{f\in F} \inf_{g\in L} \|f-g\|_X
$$
where $ \inf_L$ is taken over all $n$-dimensional subspaces of $X$.
In other words the Kolmogorov $n$-width gives the best possible error in
approximating a compact set $F$ by $n$-dimensional linear subspaces.

There has been an increasing interest last decades in nonlinear $m$-term
approximation with regard to different systems.  In \cite{T1} we  generalized the concept of classical Kolmogorov's width in order to  
use it in estimating best $m$-term approximation. For this purpose we introduced 
a nonlinear Kolmogorov's $(N,m)$-width:
$$
d_m(F,X,N) :=  \inf_{\L_N, \#\L_N \le N} \sup_{f\in F} \inf_{L\in \L_N}
\inf_{g\in L} \|f-g\|_X,
$$
where $\L_N$ is a set of at most $N$ $m$-dimensional subspaces $L$. It is clear that
$$
d_m(F,X,1) = d_m(F,X).
$$
The new feature of $d_m(F,X,N)$ is  that we allow to choose a subspace 
$L\in \L_N$ depending on $f\in F$. It is clear that the bigger $N$ the more flexibility we have 
to approximate $f$. It turns out that from the point of view of our applications the two cases 
\begin{equation}\label{1.1}
N \asymp K^m,  
\end{equation}
where $K >1 $ is a constant, and
\begin{equation}\label{1.2}
N \asymp m^{am}, 
\end{equation}
where $a >0$ is a fixed number, play an important role.

It is known (see \cite{T18}) that the $(N,m)$-widths can be used for estimating from below the best $m$-term
approximations. Let $X$ be a Banach space and let $B_X$ denote the unit ball of $X$ with the center at $0$. Denote by $B_X(y,r)$ a ball with center $y$ and radius $r$: $\{x\in X:\|x-y\|\le r\}$. For a compact set $A$ and a positive number $\e$ we define the covering number $N_\e(A,X)$
 as follows
$$
N_\e(A,X) :=\min \{n : \exists y^1,\dots,y^n :A\subseteq \cup_{j=1}^n B_X(y^j,\e)\}.
$$
It is convenient to consider along with the entropy $H_\e(A,X):= \log N_\e(A,X)$ (here and later $\log:=\log_2$) the entropy numbers $\e_k(A,X)$:
$$
\e_k(A,X) :=  \inf \{\e : \exists y^1,\dots ,y^{2^k} \in X : A \subseteq \cup_{j=1}
^{2^k} B_X(y^j,\e)\}.
$$

There are several general results (see \cite{C}) which give 
lower estimates
of the Kolmogorov widths $d_n(F,X)$ in terms of the entropy numbers
$\e_k(F,X)$. The Carl's (see \cite{C})
inequality states: for any $r>0$ we have
\begin{equation}\label{1.3}
\max_{1\le k \le n} k^r \e_k(F,X) \le C(r) \max _{1\le m \le n} m^r d_{m-1}(F,X).
\end{equation}

We proved in \cite{T1} (see also \cite{Tbook}, Section 3.5) the inequality
\begin{equation}\label{1.4}
\max_{1\le k \le n} k^r \e_k(F,X) \le C(r,K) \max _{1\le m \le n} 
m^r d_{m-1}(F,X,K^m),
\end{equation}
where we denote
$$
d_0(F,X,N) := \sup_{f\in F}\|f\|_X .
$$
This inequality is a generalization of inequality (\ref{1.3}).   
We also discussed in \cite{T1} and in Section 3.5 of \cite{Tbook} the possibility of replacing
$K^m$ by $(Kn/m)^m$ in (\ref{1.4}). The corresponding remarks (Remark 2.1 in \cite{T1} and Remark 3.5 in \cite{Tbook}) should read as follows.
\begin{Remark}\label{R1.1} Examining the proof of (\ref{1.4}) one can check that the following inequality holds
$$
n^r \e_n(F,X) \le C(r,K) \max _{1\le m \le n} 
m^r d_{m-1}(F,X,(Kn/m)^m).
$$
\end{Remark} 
In Section 2 we prove an upper bound for  $ \e_k(F,X)$ for all $k\le n$.

In Section 3 we demonstrate how the general inequality from Theorem \ref{T2.1} can be used in estimating the entropy numbers of different compacts. In particular, Corollary \ref{C3.3} gives a new proof of the corresponding upper bounds from Theorem 1 in \cite{GIY}. 

In Section 4 we study the Weak Relaxed Greedy Algorithm with respect to a system which is not a dictionary. In particular, results of Section 4 provide an algorithm which gives the same upper bounds for the best $m$-term approximation as those obtained in \cite{GIY}.

\section{A general inequality}

\begin{Theorem}\label{T2.1} Let a compact $F\subset X$ and  a number $r>0$ be such that for some
$n \in \N$
$$
  d_{m-1}(F,X,(Kn/m)^m) \le m^{-r},\quad m\le n.
$$
Then for $k\le n$
$$
\e_k(F,X) \le C(r,K) \left(\frac{\log(2n/k)}{k}\right)^r.
$$
\end{Theorem}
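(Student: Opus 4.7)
The plan is to imitate the iterative covering construction used to prove inequality (\ref{1.4}) and Remark \ref{R1.1}, but to stop the iteration at an intermediate index chosen to match the target $k$ rather than running all the way to $n$. Starting from the trivial covering $F\subseteq B_X(0,R_0)$ with $R_0=\sup_{f\in F}\|f\|_X$ and $M_0=1$, I build a sequence of refinements $F\subseteq \bigcup_{i=1}^{M_j} B_X(y^{j,i},R_j)$ for $j=1,\dots,J$, using the width hypothesis at geometrically growing dimensions $m_j:=2^j$.

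For the refinement step $j-1\to j$, I apply the hypothesis with $m=m_j$: there is a family $\L_j$ of at most $N_j:=(Kn/m_j)^{m_j}$ subspaces of dimension $m_j-1$ such that each $f\in F$ is within $m_j^{-r}$ of some $L\in\L_j$. For every previous center $y^{j-1,i}$ and every $L\in\L_j$, the set $L\cap B_X(y^{j-1,i},R_{j-1}+m_j^{-r})$ lies in an $(m_j-1)$-dimensional affine ball of radius at most $2(R_{j-1}+m_j^{-r})$, hence admits an $m_j^{-r}$-net of cardinality at most $\bigl(c(R_{j-1}+m_j^{-r})/m_j^{-r}\bigr)^{m_j-1}$. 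This produces a new covering of $F$ with radius $R_j\le 2 m_j^{-r}$ and
$$
\log M_j \le \log M_{j-1} + m_j\log N_j + (m_j-1)\log\bigl(c(R_{j-1}+m_j^{-r})/m_j^{-r}\bigr).
$$
An easy induction shows that the ratio $R_{j-1}/m_j^{-r}$ stays bounded by $2^{r+1}$ (the dyadic doubling of $m_j$ costs only a factor $2^r$), so the last term is $O_r(m_j)$ and folds into $m_j\log(Cn/m_j)$. Summing these estimates and noting that the terms $2^j\log(2n/2^j)$ grow essentially geometrically in $j$ (with ratio close to $2$) yields $\log M_J \le c_1(r,K)\,m_J\log(2n/m_J)$ for every $J$ with $m_J\le n$.

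Now I pick $J$ to be the largest index with $c_1 m_J\log(2n/m_J)\le k$; standard estimates give $m_J\asymp k/\log(2n/k)$, so $M_J\le 2^k$ and $\e_k(F,X)\le R_J\le 2m_J^{-r}\le C(r,K)(\log(2n/k)/k)^r$, the claimed bound. The main technical point is the bookkeeping on the ratio $R_{j-1}/m_j^{-r}$: one must verify that the dyadic step does not inflate the ambient radius relative to the width scale, so that the per-step covering numbers inside the affine subspaces stay controlled by a constant to the power $m_j$ rather than blowing up. Apart from that, the proof is a straightforward modification of the iterative argument of Section 3.5 of \cite{Tbook}, the new ingredient being the choice of stopping dimension.
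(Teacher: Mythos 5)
Your argument is correct and is essentially the paper's own strategy: apply the width hypothesis at dyadic dimensions $m_j=2^j$ and stop at $m_J\asymp k/\log(2n/k)$, which is precisely the role played by $2^{l(r)}$ in the paper's Lemma~\ref{L2.1}. The only difference is organizational --- you re-cover $F$ at each level by localizing the new subspaces inside the previous balls, so errors do not accumulate and every net can be taken at bounded relative resolution, whereas the paper assembles a single additive net from the telescoped increments $t_s=a_s-a_{s-1}$ and must budget the resolutions $n_s$ across levels; apart from the slip $m_j\log N_j$ (which should read $\log N_j=m_j\log(Kn/m_j)$) and the unmentioned degenerate case where no admissible $J$ exists (covered by the trivial bound $\e_k(F,X)\le d_0(F,X)\le 1$), everything checks out.
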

\begin{proof} Let $X(N,m)$ denote the union of not more than $N$ subspaces $L$ with $\dim L \le m$.
Consider a collection $\K(l) := \{X((Kn2^{-s-1})^{2^{s+1}},2^{s+1})\}^l_{s=1}$, $2^{l+1}\le n$ and denote
$$
H^r(\K(l)) := \{ f\in X : \exists L_1(f),\dots,L_l(f) : L_s(f) \in X((Kn2^{-s-1})^{2^{s+1}},2^{s+1}),
$$
and $\exists t_s(f) \in L_s(f)$ such that
$$
\|t_s(f)\|_X \le 2^{-r(s-1)}, \quad s =1,\dots,l ;\quad \|f -\sum^l_{s=1}t_s(f)\|_X \le 2^{-rl}\}.
$$
 
\begin{Lemma}\label{L2.1} We have for $r>0$
$$
\e_{2^l}(H^r(\K(l)),X) \le C(r,K)2^{-rl} (\log(Kn2^{-l}))^r,\quad 2^{l+1}\le n.
$$
\end{Lemma}
\begin{proof} We use a well known result (see, for instance, \cite{Tbook}, p. 145) to estimate  $\e_n(B_X,X)$ of
 the unit ball $B_X$ in the $d$-dimensional space $X$ :
\begin{equation}\label{2.1}
\e_n(B_X,X) \le 3( 2^{-n/d}) .  
\end{equation}
Take any sequence $\{n_s\}^{l(r)}_{s=1}$ of $l(r) \le l-2$ nonnegative integers. We will specify $l(r)$ later.
Construct $\e_{n_s}$-nets consisting of $2^{n_s}$ points each for all unit balls of the spaces in $X((Kn2^{-s-1})^{2^{s+1}},2^{s+1})$. Then
 the total number of the elements $y^s_j$ in these $\e_{n_s}$-nets does not exceed
$$
M_s := (Kn2^{-s-1})^{2^{s+1}} 2^{n_s} .
$$
We now consider  the set $A$ of elements of the form
$$
y^1_{j_1} + 2^{-r}y^2_{j_2} +\dots +2^{-r(l(r)-1)}y^{l(r)}_{j_{l(r)}}, \quad j_s \in[1,M_s],\quad
s =1,\dots, l(r).
$$
The total number of these elements does not exceed 
$$
M = \prod^{l(r)}_{s=1}M_s,\quad \log M \le \sum^{l(r)}_{s=1}2^{s+1}\log(Kn2^{-s-1})+ \sum^{l(r)}_{s=1} n_s .
$$
It is easy to see that
$$
\sum^{l(r)}_{s=1}2^{s+1}\log(Kn2^{-s-1}) \le C_12^{l(r)}\log(Kn2^{-l(r)}).
$$
  We  now set
$$
n_s := [(r+1)(l-s)2^{s+1}], \quad s=1,\dots,l(r) ,
$$ 
where $[x]$ denotes the integer part of a number $x$. We choose $l(r)\le l-2$ as a maximal 
natural number satisfying
$$
\sum^{l(r)}_{s=1}n_s \le 2^{l-1}
$$
and
$$
C_12^{l(r)}\log(Kn2^{-l(r)}) \le 2^{l-1} .
$$
It is clear that
\begin{equation}\label{1}
2^{l(r)} \ge C_22^l(\log(Kn2^{-l}))^{-1}.
\end{equation}
Then we have
$$
M\le 2^{2^l}.
$$
For the error $\e(f)$ of approximation of $f \in H^r(\K(l))$ by elements of $A$ we have
$$
\e(f) 
\le 2^{-rl} + \sum^{l(r)}_{s=1}\|t_s(f) - 2^{-r(s-1)} y^s_{j_s}\|_X + 
\sum^l_{s=l(r)+1} \|t_s(f)\|_X 
$$
$$
\le C(r)2^{-rl(r)} + \sum^{l(r)}_{s=1} 2^{-r(s-1)}\e_{n_s} (B_{L_s(f)},X) 
$$
$$
\le  C(r)2^{-rl(r)} + 3\sum^{l(r)}_{s=1} 2^{-r(s-1)} 2^{-n_s/2^{s+1}} 
\le C(r) 2^{-rl(r)} .
$$
Taking into account (\ref{1}) we complete the proof of
Lemma \ref{L2.1}.
\end{proof}

We continue the proof of Theorem \ref{T2.1}. Without loss of generality assume
$$
\max_{1\le m \le n}m^r d_{m-1}(F,X,(Kn/m)^m) < 1/2.
$$
Then for $s=1,2,\dots, l$; $l \le [\log (n-1)]$ we have
$$
d_{2^s}(F,X,(Kn2^{-s})^{2^s}) < 2^{-rs-1} .
$$
This means that for each $s = 1,2,\dots,l$, there is a collection $\L_s$ of
$(Kn2^{-s})^{2^s}$ $2^s$-dimensional spaces $L^s_j, j=1,\dots, (Kn2^{-s})^{2^s}$, such that for each
 $f\in F$ there exists a subspace $L^s_{j_s}(f)$ and an approximant 
$a_s(f)\in L^s_{j_s}(f)$ such that
$$
\|f - a_s(f)\| \le 2^{-rs-1}.
$$
Consider
\begin{equation}\label{2.2}
t_s(f) := a_s(f) - a_{s-1}(f), \quad s=2,\dots,l.
\end{equation}
Then we have 
$$
t_s(f) \in L^s_{j_s}(f)\oplus L^{s-1}_{j_{s-1}}(f), \quad \dim ( L^s_{j_s}(f)\oplus L^{s-1}_{j_{s-1}}(f))
\le 2^s + 2^{s-1} < 2^{s+1}.
$$
Note that for $K$ large enough
$$
(Kn2^{-s})^{2^s}(Kn2^{-s+1})^{2^{s-1}}\le (Kn2^{-s-1})^{2^{s+1}}.
$$
Let $X((Kn2^{-s-1})^{2^{s+1}},2^{s+1})$ denote the collection of all $ L^s_{j_s}\oplus L^{s-1}_{j_{s-1}}$ 
over various $1\le j_s\le (Kn2^{-s})^{2^s};\quad 1\le j_{s-1} \le (Kn2^{-s+1})^{2^{s-1}}$. 
For $t_s(f)$ defined by (\ref{2.2}) we have
$$
\|t_s(f)\| \le 2^{-rs-1} + 2^{-r(s-1)-1} \le 2^{-r(s-1)} .
$$
Next, for $a_1(f) \in L^1(f)$ we have
$$
\|f-a_1(f)\| \le 1/2
$$
and from $d_0(F,X) \le 1/2$ we get
$$
\|a_1(f)\| \le 1.
$$
Take $t_1(f) = a_1(f)$. Then we have
$F\subset H^r(\K(l))$ and Lemma \ref{L2.1} gives the required bound 
$$
\e_{2^l} (F) \le C(r,K)2^{-rl}(\log(Kn2^{-l}))^r, \quad 1\le l \le [\log (n-1)] .
$$
It is clear that these inequalities imply the conclusion of Theorem \ref{T2.1}.
\end{proof}
 
\section{Applications}

We begin with an application which motivated a study of $d_m(F,X,N)$ with $N=(Kn/m)^m$. Let $\D=\{g_j\}_{j=1}^n$ be a system of normalized elements of cardinality $|\D|=n$ in a Banach space $X$. Consider best $m$-term approximations of $f$ with respect to $\D$
$$
\sigma_m(f,\D)_X:= \inf_{\{c_j\};\Lambda:|\Lambda|=m}\|f-\sum_{j\in \Lambda}c_jg_j\|.
$$
For a function class $F$ set
$$
\sigma_m(F,\D)_X:=\sup_{f\in F}\sigma_m(f,\D)_X.
$$
Then it is clear that for any system $\D$, $|\D|=n$,
$$
d_m(F,X,\binom{n}{m})\le \sigma_m(F,\D)_X.
$$
Next,
$$
\binom{n}{m} \le (en/m)^m.
$$
Thus Theorem \ref{T2.1} implies the following theorem.
\begin{Theorem}\label{T3.1} Let a compact $F\subset X$ be such that there exists a normalized system $\D$, $|\D|=n$, and  a number $r>0$ such that 
$$
  \sigma_m(F,\D)_X \le m^{-r},\quad m\le n.
$$
Then for $k\le n$
\begin{equation}\label{3.0}
\e_k(F,X) \le C(r) \left(\frac{\log(2n/k)}{k}\right)^r.
\end{equation}
\end{Theorem}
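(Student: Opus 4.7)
My plan is to reduce directly to Theorem~\ref{T2.1} by translating the best $m$-term approximation hypothesis into the nonlinear Kolmogorov width hypothesis required there. Every $m$-term approximant from a normalized dictionary $\D$ of size $n$ lies in one of the $\binom{n}{m}$ subspaces $\sp\{g_{i_1},\dots,g_{i_m}\}$, so by the definitions,
$$
d_m(F,X,\binom{n}{m}) \le \sigma_m(F,\D)_X.
$$
Combining this with $\binom{n}{m}\le (en/m)^m$ and the monotonicity of $d_m(F,X,\cdot)$ in $N$, I obtain $d_m(F,X,(en/m)^m)\le m^{-r}$ for $1\le m\le n$.

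The next step is a single index shift to put this in the form required by Theorem~\ref{T2.1}. Writing the previous bound at $m-1$ in place of $m$ gives, for $2\le m\le n$,
$$
d_{m-1}(F,X,(en/(m-1))^{m-1})\le (m-1)^{-r}\le 2^r m^{-r}.
$$
It then suffices to verify that $(Kn/m)^m\ge (en/(m-1))^{m-1}$ for some absolute constant $K$ and all $2\le m\le n$, after which monotonicity in $N$ yields the hypothesis of Theorem~\ref{T2.1}. Taking logarithms and using $(1+1/(m-1))^{m-1}\le e$, this reduces to $m\log K\ge (m-1)+\log e-\log(n/m)$, which is satisfied by any $K\ge 4e$. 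The remaining case $m=1$ asks for a bound on $d_0(F,X,Kn)=\sup_F\|f\|_X$, which is finite by the compactness of $F$ and which we may rescale to be $\le 1$, absorbing the rescaling into the final constant. Invoking Theorem~\ref{T2.1} then delivers~(\ref{3.0}).

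I expect no real difficulty: the only substantive step is the index-shift calculation, and it is elementary. All $r$-dependence in the final constant $C(r)$ comes from the factor $2^r$ introduced by the shift and from the constant $C(r,K)$ inherited from Theorem~\ref{T2.1} with $K$ chosen as an absolute constant.
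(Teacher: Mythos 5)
Your proof is correct and is essentially the paper's own argument: the paper likewise reduces to Theorem~\ref{T2.1} via $d_m(F,X,\binom{n}{m})\le \sigma_m(F,\D)_X$ and $\binom{n}{m}\le (en/m)^m$, and you merely make explicit the index shift from $d_m$ to $d_{m-1}$ (with the factor $2^r$ and the enlargement of $(en/(m-1))^{m-1}$ to $(Kn/m)^m$) that the paper leaves implicit. The one caveat is your $m=1$ step: rescaling so that $\sup_{f\in F}\|f\|_X\le 1$ makes the final constant depend on $\sup_{f\in F}\|f\|_X$, which is not controlled by the stated hypothesis on $\sigma_m$ alone (e.g.\ $F=\{cg_1:|c|\le M\}$ has $\sigma_m(F,\D)_X=0$ for all $m\ge 1$ but large entropy numbers when $M$ is large); this reflects a normalization $\sup_{f\in F}\|f\|_X\le 1$ that is implicit in the theorem, appears explicitly in Theorem~\ref{T2.1} as the case $m=1$ of its hypothesis, and holds in all of the paper's applications, so it is a gap in the statement rather than in your argument relative to the paper's.
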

\begin{Remark}\label{R3.1} Suppose that a compact $F$ from Theorem \ref{T3.1} belongs to an $n$-dimensional subspace $X_n:=\sp(\D)$. Then in addition to (\ref{3.0}) we have 
 for $k\ge n$
\begin{equation}\label{3.0'}
\e_k(F,X) \le C(r)n^{-r}2^{-k/n}.
\end{equation}
\end{Remark}
\begin{proof}  
Inequality (\ref{3.0'}) follows from Theorem \ref{T3.1} with $X=X_n$, $k=n$, inequality (\ref{2.1}) and a simple well known inequality
\begin{equation}\label{3.2}
\e_{k_1+k_2}(A,X_n) \le \e_{k_1}(A,X_n)\e_{k_2}(B_{X_n},X_n),  
\end{equation}
where $A$ is a compact and $B_{X_n}$ is a unit ball of $X_n$. 
\end{proof}

As a corollary of Theorem \ref{T3.1} and Remark \ref{R3.1} we obtain the following classical bound. 
\begin{Corollary}\label{C3.1} For any $0< q\le \infty$ and $\max(1,q) \le p\le \infty$ we have
$$
 \e_k(B^n_q,\ell^n_p) \le C(q,p) \left\{\begin{array}{ll} (\frac{\log(2n/k)}{k})^{1/q-1/p}, & k\le n\\
 2^{-k/n}n^{1/p-1/q}, &  k\ge n.\end{array} \right.
$$
\end{Corollary}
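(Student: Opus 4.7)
The plan is to specialize Theorem \ref{T3.1} and Remark \ref{R3.1} to the standard coordinate basis $\D = \{e_j\}_{j=1}^n$ (which is normalized in every $\ell^n_p$), with $F = B^n_q$, $X = \ell^n_p$, and exponent $r := 1/q - 1/p \ge 0$. When $p = q$ both branches of the corollary follow directly from (\ref{2.1}) and the monotonicity of $\e_k$, so I may assume $p > q$ and hence $r > 0$.

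The only substantive ingredient is the classical $m$-term bound
$\sigma_m(B^n_q,\D)_{\ell^n_p} \le m^{-r}$ for $1 \le m \le n$.
To establish it I fix $x \in B^n_q$ and let $(x^*_j)$ denote the nonincreasing rearrangement of $(|x_j|)$; then the best $m$-term approximation with respect to $\D$ retains the $m$ largest coordinates, so its residual $\ell^n_p$-norm equals $\bigl(\sum_{j>m}(x^*_j)^p\bigr)^{1/p}$ (or $x^*_{m+1}$ if $p=\infty$). The pointwise bound $x^*_j \le j^{-1/q}$, obtained from $j(x^*_j)^q \le \sum_{i\le j}(x^*_i)^q \le 1$, disposes of the case $p=\infty$. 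For $p<\infty$ I write $(x^*_j)^{p-q} \le (m+1)^{-(p-q)/q}$ for $j>m$, factor this out of $\sum_{j>m}(x^*_j)^p = \sum_{j>m}(x^*_j)^{p-q}(x^*_j)^q$, and use $\sum_j(x^*_j)^q \le 1$ to obtain exponent $(p-q)/(pq) = r$ after taking the $p$th root.

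With the hypothesis verified, Theorem \ref{T3.1} delivers the first branch for $k \le n$. For $k \ge n$ I invoke Remark \ref{R3.1}: since $F \subset \sp(\D) = \ell^n_p$, the bound at $k=n$ combined with (\ref{3.2}) and the volume estimate (\ref{2.1}) applied to $B_{\ell^n_p}$ produces $\e_k(B^n_q,\ell^n_p) \le C(q,p)\,n^{-r}2^{-k/n}$, which is the second branch after substituting $-r = 1/p-1/q$. There is no serious obstacle in this argument: the rearrangement bound for $\sigma_m$ is the pivotal (and classical) step, and the framework of nonlinear Kolmogorov widths developed in Section 2 automatically converts it into the claimed two-regime entropy estimate.
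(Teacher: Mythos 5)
Your proposal is correct and follows essentially the same route as the paper: reduce to the tail estimate $\sigma_m(B^n_q,\{e_j\})_{\ell^n_p}\le m^{1/p-1/q}$ (the paper's inequality (\ref{3.1}), which you prove by the standard rearrangement/factoring argument the paper leaves as "easy to check"), then apply Theorem \ref{T3.1} for $k\le n$ and Remark \ref{R3.1} with (\ref{3.2}) and (\ref{2.1}) for $k\ge n$. Your explicit handling of the degenerate case $p=q$ (where $r=0$) is a small point the paper glosses over, but otherwise the arguments coincide.
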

\begin{proof} Indeed, it is well known and easy to check that for a sequence of nonnegative numbers $x_1\ge x_2\ge  \cdots \ge x_n$ we have for $0<q\le p$
\begin{equation}\label{3.1}
\left(\sum_{j=m+1}^n x_j^p\right)^{1/p} \le m^{\frac{1}{p}-\frac{1}{q}}\left(\sum_{j=1}^n x_j^q\right)^{1/q}.
\end{equation}
Therefore, for $0<q\le p$
$$
\sigma_m(B^n_q,\{e_j\}_{j=1}^n)_{\ell^n_p} \le m^{\frac{1}{p}-\frac{1}{q}},\quad m\le n,
$$
where $\{e_j\}_{j=1}^n$ is a canonical basis for $\R^n$.
Applying Theorem \ref{T3.1} and Remark \ref{R3.1} we obtain   Corollary \ref{C3.1}.  
\end{proof}
For a normalized system $\D$ define $A_q(\D)$, $q>0$, as a closure in $X$ of the set
$$
\{x: x=\sum_jc_jg_j, \, g_j\in\D,\,\sum_j|c_j|^q\le 1\}.
$$
\begin{Corollary}\label{C3.2} Let $1<p<\infty$. For a normalized system $\D$ of cardinality $|\D|=n$ we have
\begin{equation}\label{3.2'}
\e_k(A_1(\D),L_p)\le C(p)\left(\frac{\log(2n/k)}{k}\right)^{\max(\frac{1}{2},\frac{1}{p})-1},\quad k\le n.
\end{equation}
\end{Corollary}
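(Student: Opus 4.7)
The plan is to apply Theorem \ref{T3.1} directly to $F = A_1(\D)$ in $X = L_p$ after plugging in the classical Maurey--Jones--Barron bound on best $m$-term approximation of absolutely convex hulls.

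First I would verify the hypothesis of Theorem \ref{T3.1}: for $1 \le m \le n$,
$$
\sigma_m(A_1(\D),\D)_{L_p} \le C(p)\, m^{-r}, \qquad r = 1 - \max(1/2, 1/p).
$$
For $p \ge 2$, the space $L_p$ has Rademacher type $2$, and Maurey's probabilistic averaging argument shows that any $f = \sum_j c_j g_j$ with $\sum_j |c_j| \le 1$ can be approximated by an $m$-term combination of elements of $\D$ to within $C(p) m^{-1/2}$ in $L_p$, giving $r = 1/2$. For $1 < p \le 2$, $L_p$ has type $p$, and the analogous argument (equivalently, the DeVore--Temlyakov analysis of the Weak Relaxed Greedy Algorithm in a uniformly smooth Banach space whose modulus of smoothness is of power type $p$) yields rate $r = 1 - 1/p$. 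The two cases combine into the single exponent $r = 1 - \max(1/2, 1/p)$.

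With this approximation rate in hand, Theorem \ref{T3.1}, applied to the rescaled set $C(p)^{-1} A_1(\D)$ so that the normalization $\sigma_m \le m^{-r}$ holds on the nose, immediately delivers
$$
\e_k(A_1(\D), L_p) \le C(p) \left(\frac{\log(2n/k)}{k}\right)^{r}, \qquad k \le n,
$$
which is (\ref{3.2'}).

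I do not anticipate a genuine obstacle. The argument reduces entirely to invoking the Maurey-type best $m$-term bound for $A_1(\D)$ in $L_p$ and feeding it into Theorem \ref{T3.1}. The only care needed is in the case split at $p = 2$, reflecting the change of type (and modulus of smoothness) of $L_p$, and in the routine absorption of the Maurey constant into the final $C(p)$ after the rescaling that normalizes the hypothesis of Theorem \ref{T3.1}.
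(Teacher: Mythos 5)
Your proposal is correct and follows the paper's own route exactly: the paper likewise invokes the Maurey-type bound $\sigma_m(A_1(\D),\D)_{L_p}\le C(p)m^{\max(1/2,1/p)-1}$ (its inequality (\ref{3.3}), cited from \cite{DGDS} and \cite{Tbook}) and feeds it into Theorem \ref{T3.1}, with the rescaling by $C(p)$ handled implicitly. The only discrepancy is the sign of the exponent: your conclusion carries $(\log(2n/k)/k)^{1-\max(1/2,1/p)}$, which is the bound that Theorem \ref{T3.1} actually yields (and is consistent with the $q=1$ case of Corollary \ref{C3.3}), whereas the exponent $\max(\frac12,\frac1p)-1$ as printed in (\ref{3.2'}) appears to be a sign typo.
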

\begin{proof} It is known (see \cite{DGDS} and \cite{Tbook}) that
\begin{equation}\label{3.3}
\sigma_m(A_1(\D),\D)_{L_p} \le C(p) m^{\max(\frac{1}{2},\frac{1}{p})-1}.
\end{equation}
It remains to apply Theorem \ref{T3.1}. 
\end{proof}

\begin{Corollary}\label{C3.3} Let $\D$ be a normalized system of cardinality $|\D|=n$. Then for $0<q\le 1$ and $1<p<\infty$ we have
$$
\e_k(A_q(\D),L_p)\le C(q,p) \left\{\begin{array}{ll} (\frac{\log(2n/k)}{k})^{1/q-\max(\frac{1}{2},\frac{1}{p})}, & k\le n\\
 2^{-k/n}n^{\max(\frac{1}{2},\frac{1}{p})-1/q}, &  k\ge n.\end{array} \right.
$$
\end{Corollary}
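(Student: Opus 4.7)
The plan is to mirror the strategy behind Corollaries \ref{C3.1} and \ref{C3.2}: first establish a best $m$-term approximation bound
$$
\sigma_m(A_q(\D),\D)_{L_p}\le C(q,p)\, m^{\max(1/2,1/p)-1/q},\quad m\le n,
$$
and then feed it into Theorem \ref{T3.1} (for $k\le n$) and Remark \ref{R3.1} (for $k\ge n$), with the exponent $r:=1/q-\max(1/2,1/p)$. Since $0<q\le 1$ gives $1/q\ge 1$ and $1<p<\infty$ gives $\max(1/2,1/p)<1$, we have $r>0$, so Theorem \ref{T3.1} is applicable.

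To establish the $\sigma_m$ bound, I would use a two-stage truncation-plus-greedy argument. Given any representative $x=\sum_j c_j g_j$ with $\sum_j|c_j|^q\le 1$, reorder so that $|c_{(1)}|\ge |c_{(2)}|\ge\cdots$. Split $x=x_1+x_2$ where $x_1=\sum_{j\le m}c_{(j)}g_{(j)}$ is the exact $m$-term greedy piece. By inequality (\ref{3.1}) applied with exponents $q\le 1$:
$$
\sum_{j>m}|c_{(j)}|\le m^{1-1/q}\left(\sum_j |c_{(j)}|^q\right)^{1/q}\le m^{1-1/q},
$$
so $x_2\in m^{1-1/q}A_1(\D)$. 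Applying the Dilworth-Kalton-style bound (\ref{3.3}) used in the proof of Corollary \ref{C3.2} to the rescaled element $x_2/m^{1-1/q}\in A_1(\D)$ produces an additional $m$-term approximant $y$ with
$$
\|x_2-y\|_{L_p}\le C(p)\, m^{1-1/q}\, m^{\max(1/2,1/p)-1}=C(p)\, m^{\max(1/2,1/p)-1/q}.
$$
Then $x_1+y$ is a $2m$-term approximant of $x$ with the desired error, and a standard change of variable $m\mapsto 2m$ absorbs the factor of $2$ into the constant $C(q,p)$.

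With this in hand, Theorem \ref{T3.1} (applied to a scaled version of $A_q(\D)$ to reduce the constant to $1$) gives the first branch of the bound:
$$
\e_k(A_q(\D),L_p)\le C(q,p)\left(\frac{\log(2n/k)}{k}\right)^{1/q-\max(1/2,1/p)},\quad k\le n.
$$
For the second branch, I would observe that $A_q(\D)$ is a bounded subset of the finite-dimensional subspace $X_n:=\sp(\D)$ (the unit $\ell_q$-quasi-ball in $\R^n$ is compact), so Remark \ref{R3.1} applies with $r=1/q-\max(1/2,1/p)$ and yields
$$
\e_k(A_q(\D),L_p)\le C(q,p)\, n^{\max(1/2,1/p)-1/q}\, 2^{-k/n},\quad k\ge n.
$$

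The only nontrivial step is the $\sigma_m$ estimate, and within it the invocation of (\ref{3.3}); the rest is bookkeeping. The mild subtlety to watch is that $A_1(\D)$ is defined as a closure and one must ensure the greedy tail really lies in $m^{1-1/q}A_1(\D)$, which is immediate once we represent $x$ with its explicit $\ell_q$-coefficient expansion (using density, or directly in the finite-dimensional setting). Positivity of $r$ at the boundary cases (in particular $q=1$ with $p$ close to $1$) should also be checked, but holds strictly because $p>1$ is assumed.
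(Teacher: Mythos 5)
Your proposal is correct and follows essentially the same route as the paper: the paper's own proof also keeps the $m$ largest coefficients exactly, bounds the $\ell_1$-norm of the tail by $m^{1-1/q}$ via (\ref{3.1}) with $p=1$, applies (\ref{3.3}) to the rescaled tail to get the $\sigma_{2m}$ bound, and then invokes Theorem \ref{T3.1} and Remark \ref{R3.1}. Your write-up merely fills in the details that the paper leaves implicit.
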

\begin{proof} We estimate $\sigma_m(A_q(\D),\D)_{L_p}$. If $q=1$ then the bound is given by (\ref{3.3}). If $q<1$ then we use (\ref{3.1}) with $p=1$ and by (\ref{3.3}) we get
$$
\sigma_{2m}(A_q(\D),\D)_{L_p} \le C(q,p)m^{\max(\frac{1}{2},\frac{1}{p})-\frac{1}{q}}.
$$
Applying Theorem \ref{T3.1} and Remark \ref{R3.1} we obtain   Corollary \ref{C3.3}.  
\end{proof}
We note that Corollary \ref{C3.3} gives the same upper bounds as in Theorem 1 of \cite{GIY}. 
It is proved in \cite{GIY} that these bounds are best possible up to a constant.

\section{A greedy algorithm}

In Section 3 we showed how best $m$-term approximations can be used for estimating 
the entropy numbers. Here we note that $m$-term approximations are very important by themselves in 
the context of sparse approximation. In this context an important problem is to provide an algorithm that 
builds a good $m$-term approximation. We discuss a 
greedy algorithm in this section. The theory of 
greedy approximation is well developed (see \cite{Tbook}). A typical problem of greedy approximation is 
a problem of $m$-term approximation with respect to a dictionary. We say that a set of elements (functions) $\D$ from a Banach space $X$ is a dictionary, respectively, symmetric dictionary, if each $g\in \D$ has norm bounded by one ($\|g\|\le1$),
$$
g\in \D \quad \text{implies} \quad -g \in \D,
$$
and the closure of $\sp \D$ is $X$. We denote the closure (in $X$) of the convex hull of $\D$ by $A_1(\D)$. In this section we discuss greedy algorithms with regard to a system 
 $\D$ that is not a dictionary. Here, we will discuss a variant of the Weak Relaxed Greedy Algorithm (WRGA).  Let $X$ be a real Banach space and let $\D:=\{g\}$ be a system of elements $g\in X$ such that $\|g\|\le 1$ and $g\in\D$ implies $-g\in\D$. Usually, in the theory of greedy algorithms we consider approximation with regard to a dictionary $\D$. One of the properties of a dictionary $\D$ is that the closure of $\sp \D$ is equal to $X$. In this section we do not assume that the system $\D$ is a dictionary. In particular, we do not assume that the closure of $\sp \D$ is $X$. This setting is motivated by applications in Learning Theory (see Chapter 4 of \cite{Tbook}). 
 
For a nonzero element $f\in X$ we let $F_f$ denote a norming (peak) functional for $f$: 
$$
\|F_f\| =1,\qquad F_f(f) =\|f\|.
$$
The existence of such a functional is guaranteed by Hahn-Banach theorem. 
 
 Let 
$\tau := \{t_k\}_{k=1}^\infty$ be a given weakness sequence of   numbers $t_k \in[0,1]$, $k=1,\dots$. 

 {\bf Weak Relaxed Greedy Algorithm (WRGA).} 
We define $f^r_0 := f^{r,\tau}_0 :=f$ and $G^r_0:=G^{r,\tau}_0 := 0$. Then, for each $m\ge 1$ we have the following inductive definition.

(1) $\varphi^{r}_m := \varphi^{r,\tau}_m \in \D$ is any element satisfying
$$
F_{f^{r}_{m-1}}(\varphi^{r}_m - G^r_{m-1}) \ge t_m \sup_{g\in \D} F_{f^{r}_{m-1}}(g - G^r_{m-1}).
$$

(2) Find $0\le \lambda_m \le 1$ such that
$$
\|f-((1-\la_m)G^r_{m-1} + \la_m\varphi^{r}_m)\| = \inf_{0\le \la\le 1}\|f-((1-\la)G^r_{m-1} + \la\varphi^{r}_m)\|
$$
and define
$$
G^r_m:= G^{r,\tau}_m := (1-\la_m)G^r_{m-1} + \la_m\varphi^{r}_m.
$$

(3) Let
$$
f^{r}_m := f^{r,\tau}_m := f-G^r_m.
$$
 
For a Banach space $X$ we define the modulus of smoothness
$$
\rho(u) := \sup_{\|x\|=\|y\|=1}(\frac{1}{2}(\|x+uy\|+\|x-uy\|)-1).
$$
The uniformly smooth Banach space is the one with the property
$$
\lim_{u\to 0}\rho(u)/u =0.
$$
The following theorem was proved in \cite{T15} (see also Theorem 6.17 on p. 348 in \cite{Tbook}) for $\D$ being a dictionary.
\begin{Theorem}\label{T4.1} Let $X$ be a uniformly smooth Banach space with modulus of smoothness $\rho(u) \le \gamma u^q$, $1<q\le 2$. Then, for a sequence $\tau := \{t_k\}_{k=1}^\infty$, $t_k \le 1$, $k=1,2,\dots,$ we have for any $f\in A_1(\D)$ that 
$$
\|f^{r,\tau}_m\| \le C_1(q,\gamma)\left(1+\sum_{k=1}^m t_k^p\right)^{-1/p},\quad p:= \frac{q}{q-1},
$$
with a constant $C_1(q,\gamma)$ which may depend only on $q$ and $\gamma$.
\end{Theorem}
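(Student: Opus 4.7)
The plan is to run the proof of Theorem~6.17 of \cite{Tbook} essentially verbatim and to verify that the one place where the dictionary hypothesis $\overline{\sp\,\D}=X$ enters survives here under the weaker assumption $f\in A_1(\D)$ combined with the symmetry $g\in\D\Rightarrow -g\in\D$. That one place is the inequality
$$
\sup_{g\in\D}F_{f^{r}_{m-1}}(g-G^{r}_{m-1})\ge \|f^{r}_{m-1}\|.
$$
To see it, represent $f$ as a limit of finite sums $\sum_j c_j g_j$ with $g_j\in\D$ and $\sum_j|c_j|\le 1$; by symmetry of $\D$ we may absorb signs and assume $c_j\ge 0$, whence
$$
F_{f^{r}_{m-1}}(f)\le \sum_j c_j\,F_{f^{r}_{m-1}}(g_j)\le \sup_{g\in\D}F_{f^{r}_{m-1}}(g).
$$
Subtracting $F_{f^{r}_{m-1}}(G^{r}_{m-1})$ and using $F_{f^{r}_{m-1}}(f^{r}_{m-1})=\|f^{r}_{m-1}\|$ yields the claim, and the WRGA selection rule then gives $F_{f^{r}_{m-1}}(\varphi^{r}_m-G^{r}_{m-1})\ge t_m\|f^{r}_{m-1}\|$.

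Next, I would observe by induction that $G^{r}_m$ is a convex combination of $0$ and elements of $\D$, hence $\|G^{r}_m\|\le 1$ and $\|\varphi^{r}_m-G^{r}_{m-1}\|\le 2$. Plugging $x=f^{r}_{m-1}$ and $y=-\lambda(\varphi^{r}_m-G^{r}_{m-1})$ into the standard consequence of the modulus of smoothness
$$
\|x+y\|+\|x-y\|\le 2\|x\|\bigl(1+\rho(\|y\|/\|x\|)\bigr),
$$
using $\rho(u)\le \gamma u^q$, and bounding $\|f^{r}_{m-1}+\lambda(\varphi^{r}_m-G^{r}_{m-1})\|\ge F_{f^{r}_{m-1}}(f^{r}_{m-1})+\lambda F_{f^{r}_{m-1}}(\varphi^{r}_m-G^{r}_{m-1})\ge (1+\lambda t_m)\|f^{r}_{m-1}\|$ via Step~1, one obtains, for every $0\le\lambda\le 1$,
$$
\|f^{r}_m\|\le \|f^{r}_{m-1}\|(1-\lambda t_m)+2^{q+1}\gamma\,\lambda^q\,\|f^{r}_{m-1}\|^{1-q}
$$
(the optimality of $\lambda_m$ in step~(2) of the algorithm only improves this). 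Minimizing the right-hand side in $\lambda\in[0,1]$ leads after elementary manipulation to a one-step estimate of the form
$$
\|f^{r}_m\|^{-p}\ge \|f^{r}_{m-1}\|^{-p}+c(q,\gamma)\,t_m^p,\qquad p=\frac{q}{q-1}.
$$

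Iterating from $\|f\|\le 1$ (which holds because $f\in A_1(\D)$) gives $\|f^{r}_m\|^{-p}\ge 1+c(q,\gamma)\sum_{k=1}^m t_k^p$, which rearranges to the desired bound with $C_1(q,\gamma):=\max\bigl(1,c(q,\gamma)^{-1/p}\bigr)$. The new content is confined to Step~1, where the symmetry of $\D$ and the hypothesis $f\in A_1(\D)$ take over the role played, in the dictionary setting of \cite{T15,Tbook}, by the density of $\sp\,\D$ in $X$; the remaining two steps are identical to the dictionary case, so I do not expect a real obstacle beyond the usual bookkeeping in the one-step smoothness estimate.
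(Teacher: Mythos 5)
Your proposal is correct and follows essentially the same route as the paper: the paper cites Theorem \ref{T4.1} from \cite{T15}/\cite{Tbook} and proves the generalization Theorem \ref{T4.2} by exactly this scheme (the norming-functional step via Lemma 6.10 of \cite{Tbook}, the modulus-of-smoothness estimate with $\|\varphi^r_m-G^r_{m-1}\|\le 2$, and the recursion $a_m\le a_{m-1}(1-c\,t_m^p a_{m-1}^p)$), so your argument is just the case $b=\inf_{\phi\in A_1(\D)}\|f-\phi\|=0$. The only cosmetic difference is that you solve the recursion directly via $a_m^{-p}\ge a_{m-1}^{-p}+c\,t_m^p$ instead of citing the ready-made lemma from \cite{Tbook}, p.~345.
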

We prove here an analog of the above theorem when we do not assume that $\D$ is a dictionary and only assume that $\D=\{g\}$ is a symmetric system with a property $\|g\|\le 1$.
\begin{Theorem}\label{T4.2} Let $X$ be a uniformly smooth Banach space with modulus of smoothness $\rho(u) \le \gamma u^q$, $1<q\le 2$. Then, for a sequence $\tau := \{t_k\}_{k=1}^\infty$, $t_k \le 1$, $k=1,2,\dots,$ we have for any $f\in X$ that 
$$
\|f^{r,\tau}_m\| \le \inf_{\phi\in A_1(\D)}\|f-\phi\| + C_2(q,\gamma)\left(1+\sum_{k=1}^m t_k^p\right)^{-1/p},\quad p:= \frac{q}{q-1},
$$
with a constant $C_2(q,\gamma)$ which may depend only on $q$ and $\gamma$.
\end{Theorem}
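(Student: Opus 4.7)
The plan is to mimic the proof of Theorem \ref{T4.1}, substituting a near-optimal $\phi\in A_1(\D)$ for the role previously played by the target element itself. Fix $\e > \inf_{\phi'\in A_1(\D)}\|f-\phi'\|$ and pick $\phi\in A_1(\D)$ with $\|f-\phi\|\le\e$. Set $a_m:=\|f^{r,\tau}_m\|$ and $b_m:=\max(a_m-\e,0)$. Since $\phi$ is a limit of convex combinations of elements of norm $\le 1$ we have $\|\phi\|\le 1$, so $a_0\le 1+\e$ and $b_0\le 1$. Taking $\la=0$ is always a candidate in step (2), so $\{a_m\}$ is non-increasing; once $a_m\le\e$ the bound of the theorem is trivial. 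Hence I may assume $b_{m-1}>0$ throughout.

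The key algorithmic estimate must now be referenced to $\phi$ rather than $f$. With $F:=F_{f^r_{m-1}}$, the fact that $A_1(\D)$ is the closed convex hull of $\D$, together with linearity and continuity of $F$, gives
\[
\sup_{g\in\D} F(g-G^r_{m-1}) \;\ge\; F(\phi-G^r_{m-1}) \;=\; F(f^r_{m-1})-F(f-\phi) \;\ge\; a_{m-1}-\e \;=\; b_{m-1},
\]
so the weak greedy condition yields $F(\ff^r_m - G^r_{m-1})\ge t_m b_{m-1}$. Combining this with the smoothness inequality $\|x+uy\|\le\|x\|+uF_x(y)+2\|x\|\rho(u\|y\|/\|x\|)$ (a standard consequence of the definition of $\rho$ and of the norming property $\|x-uy\|\ge\|x\|-uF_x(y)$), applied to $x=f^r_{m-1}$, $y=G^r_{m-1}-\ff^r_m$ (so $\|y\|\le 2$) and $u=\la\in[0,1]$, together with $\rho(u)\le\gamma u^q$ and the minimality in step (2), produces
\[
a_m \;\le\; \inf_{\la\in[0,1]}\bigl(a_{m-1} - \la t_m b_{m-1} + 2^{q+1}\gamma\la^q a_{m-1}^{1-q}\bigr).
\]
Since $q>1$ and $a_{m-1}\ge b_{m-1}$, the bound $a_{m-1}^{1-q}\le b_{m-1}^{1-q}$ gives
\[
b_m \;\le\; \inf_{\la\in[0,1]}\bigl(b_{m-1}(1-\la t_m) + 2^{q+1}\gamma\la^q b_{m-1}^{1-q}\bigr),
\]
which is formally the recursion that drives the proof of Theorem \ref{T4.1}.

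From here I would proceed exactly as in that proof: optimize near $\la_\ast \asymp t_m^{1/(q-1)}b_{m-1}^{q/(q-1)}$, conclude $b_m\le b_{m-1}(1-c\,t_m^p b_{m-1}^p)$ for some $c=c(q,\gamma)>0$, and use $(1-x)^{-p}\ge 1+px$ for $x\in[0,1]$ to telescope into $b_m^{-p}\ge b_0^{-p}+cp\sum_{k=1}^m t_k^p\ge 1+cp\sum_{k=1}^m t_k^p$. This yields $b_m\le C_1(q,\gamma)\bigl(1+\sum_{k=1}^m t_k^p\bigr)^{-1/p}$, hence $\|f^{r,\tau}_m\|\le\e+C_1(q,\gamma)\bigl(1+\sum_{k=1}^m t_k^p\bigr)^{-1/p}$, and letting $\e$ decrease to the infimum delivers the theorem with $C_2=C_1$. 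The main nuisance is the boundary case $\la_\ast>1$, which can occur only while $b_{m-1}$ is comparable to $b_0\le 1$: there one forces $\la=1$ and checks that this already yields a uniform decrement, so $b_{m-1}$ drops below a threshold depending only on $q,\gamma$ after a bounded number of steps, after which the analysis above takes over. This transient is identical to the one handled in the proof of Theorem \ref{T4.1}, so no genuinely new idea is needed beyond the $\e$-shift.
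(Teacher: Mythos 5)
Your proposal is correct and follows essentially the same route as the paper: the weak greedy step is compared against a (near-)best approximant from $A_1(\D)$ via the linearity of the norming functional on convex combinations, the modulus-of-smoothness inequality yields the recursion $b_m\le b_{m-1}(1-ct_m^pb_{m-1}^p)$, and telescoping gives the $\bigl(1+\sum_k t_k^p\bigr)^{-1/p}$ decay. The only differences are cosmetic: the paper works directly with $a_m=\|f^r_m\|-b$ where $b$ is the exact infimum (taking the supremum over $A_1(\D)$ instead of fixing one $\phi$ and passing to the limit in $\e$), and it observes that the boundary case $\la_\ast>1$ yields the very same recursion, so no separate transient analysis is needed.
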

\begin{Remark}\label{R4.1} In case of a Hilbert space $H$ there are stronger results for similar greedy algorithms with $\tau=\{1\}$ (see \cite{Tbook}, p. 99, Theorem 2.28):
$$
\|f_m\|^2_H \le \left( \inf_{\phi\in A_1(\D)}\|f-\phi\|_H\right)^2 + Cm^{-1}.
$$
 \end{Remark}
\begin{proof}  Proof of Theorem \ref{T4.2} is similar to the proof of Theorem \ref{T4.1}. 
Denote
$$
b:=\inf_{\phi\in A_1(\D)}\|f-\phi\|.
$$
We use the following lemma.
\begin{Lemma}\label{L4.1} Let $X$ be a uniformly smooth Banach space with modulus of smoothness $\rho(u)$. Then, for a given $f\in A_1(\D)$   we have
$$
\|f^{r,\tau}_m\| \le  \inf_{0\le\la\le 1}\big(\|f^{r,\tau}_{m-1}\|-\la t_m(\|f^{r,\tau}_{m-1}\|-b) 
$$
$$
+ 2\|f^{r,\tau}_{m-1}\|\rho\left(\frac{2\la}{\|f^{r,\tau}_{m-1}\|}\right)\big),
\quad m=1,2,\dots .
$$
\end{Lemma}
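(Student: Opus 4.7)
The plan is to follow the standard three-step Banach space weak greedy convergence argument, pushing it through the non-dictionary setting by invoking $A_1(\D)$ and the symmetry of $\D$ at the one place where density of $\sp\D$ would otherwise be used. First I would exploit the optimality of $\la_m$ in step (2) of WRGA: for every $\la\in[0,1]$,
$$
\|f^{r,\tau}_m\|\le\|f^{r,\tau}_{m-1}-\la(\ff^r_m - G^r_{m-1})\|.
$$

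Next I would apply the standard smoothness inequality
$$
\|x-\la y\|\le\|x\|-\la F_x(y)+2\|x\|\rho\!\left(\frac{\la\|y\|}{\|x\|}\right),\qquad x\ne 0,
$$
which follows from the definition of $\rho$ applied to $x/\|x\|$ and $y/\|y\|$, combined with the lower bound $\|x+\la y\|\ge F_x(x+\la y)=\|x\|+\la F_x(y)$. Applied with $x=f^{r,\tau}_{m-1}$ and $y=\ff^r_m-G^r_{m-1}$, and using that an inductive look at step (2) of WRGA (a convex combination of two vectors of norm $\le 1$) yields $\|G^r_{m-1}\|\le 1$, hence $\|\ff^r_m-G^r_{m-1}\|\le 2$, the monotonicity of $\rho$ then gives
$$
\|f^{r,\tau}_m\|\le\|f^{r,\tau}_{m-1}\|-\la F_{f^{r,\tau}_{m-1}}(\ff^r_m - G^r_{m-1})+2\|f^{r,\tau}_{m-1}\|\rho\!\left(\frac{2\la}{\|f^{r,\tau}_{m-1}\|}\right).
$$

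The remaining and decisive task is to lower bound $F_{f^{r,\tau}_{m-1}}(\ff^r_m - G^r_{m-1})$ in terms of $\|f^{r,\tau}_{m-1}\|-b$. For any $\phi\in A_1(\D)$, the identity $F_{f^{r,\tau}_{m-1}}(f^{r,\tau}_{m-1})=\|f^{r,\tau}_{m-1}\|$ combined with $f^{r,\tau}_{m-1}=f-G^r_{m-1}$ gives
$$
F_{f^{r,\tau}_{m-1}}(\phi-G^r_{m-1})=\|f^{r,\tau}_{m-1}\|-F_{f^{r,\tau}_{m-1}}(f-\phi)\ge\|f^{r,\tau}_{m-1}\|-\|f-\phi\|.
$$
Writing $\phi=\sum_j c_j g_j$ with $g_j\in\D$ and $\sum_j|c_j|\le 1$, the symmetry $-g\in\D$ of $\D$ lets me take $c_j\ge 0$, so $F_{f^{r,\tau}_{m-1}}(\phi)\le\sup_{g\in\D}F_{f^{r,\tau}_{m-1}}(g)$; subtracting $F_{f^{r,\tau}_{m-1}}(G^r_{m-1})$ then yields $\sup_{g\in\D}F_{f^{r,\tau}_{m-1}}(g-G^r_{m-1})\ge F_{f^{r,\tau}_{m-1}}(\phi-G^r_{m-1})$. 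Taking the infimum over $\phi\in A_1(\D)$ and combining with the weak greedy selection in step (1) of WRGA gives $F_{f^{r,\tau}_{m-1}}(\ff^r_m - G^r_{m-1})\ge t_m(\|f^{r,\tau}_{m-1}\|-b)$, which, inserted into the smoothness bound and minimized over $\la\in[0,1]$, produces the claimed recursion. The main obstacle, and the only real departure from the dictionary argument used in the proof of Theorem \ref{T4.1}, lies precisely in this last step: without $\overline{\sp\D}=X$ one cannot drive the supremum of $F_{f^{r,\tau}_{m-1}}$ on $\D$ all the way down to $\|f^{r,\tau}_{m-1}\|$, only to $\|f^{r,\tau}_{m-1}\|-b$, and the symmetry of $\D$ is exactly what is needed to effect the comparison with an arbitrary $\phi\in A_1(\D)$ after absolute values are taken.
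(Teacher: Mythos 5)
Your proof is correct and takes essentially the same route as the paper's: the optimality of $\la_m$, the smoothness inequality for $\|x\pm\la y\|$ combined with the norming-functional lower bound, the trivial bound $\|\ff^r_m-G^r_{m-1}\|\le 2$, and the reduction of $\sup_{g\in\D}F_{f^r_{m-1}}(g-G^r_{m-1})$ to $\sup_{\phi\in A_1(\D)}$, which the paper handles by citing Lemma 6.10 of \cite{Tbook} and you re-derive directly from the symmetry of $\D$. No gaps.
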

\begin{proof} We have  
$$
f^{r}_m := f-((1-\la_m)G^r_{m-1}+\la_m\varphi^{r}_m) = f^{r}_{m-1}-\la_m(\varphi^{r}_m-G^r_{m-1})
$$
and
$$
\|f^{r}_m\| = \inf_{0\le \la\le 1}\|f^{r}_{m-1}-\la(\varphi^{r}_m-G^r_{m-1})\|.
$$
We have from the definition of the modulus of smoothness for any $\la$
 $$
\|f^{r}_{m-1}-\la(\varphi^{r}_m-G^r_{m-1})\| +\|f^{r}_{m-1}+\la(\varphi^{r}_m-G^r_{m-1})\| \le
$$
\begin{equation}\label{4.1}
2\|f^{r}_{m-1}\|(1+\rho(\frac{\la\|\varphi^{r}_m-G^r_{m-1}\|}{\|f^{r}_{m-1}\|})).
 \end{equation}
Next we get for $\la \ge 0$
$$
\|f^{r}_{m-1}+\la(\varphi^{r}_m-G^r_{m-1})\| \ge F_{f^{r}_{m-1}}(f^{r}_{m-1}+\la (\varphi^{r}_m -G^r_{m-1})) =
$$
$$
\|f^{r}_{m-1}\| +\la F_{f^{r}_{m-1}}(\varphi^{r}_m -G^r_{m-1}) \ge \|f^{r}_{m-1}\|+\la t_m \sup_{g\in \D} F_{f^{r}_{m-1}}(g-G^r_{m-1}).
$$
Using Lemma 6.10, p. 343, from \cite{Tbook} we continue
$$
= \|f^{r}_{m-1}\|+\la t_m\sup_{\phi \in A_1(\D)} F_{f^{r}_{m-1}}(\phi-G^r_{m-1})\ge \|f^{r}_{m-1}\| +\la t_m (\|f^{r}_{m-1}\|-b).
$$
Using the trivial estimate $\|\varphi^{r}_m -G^r_{m-1}\| \le 2$ we obtain  from (\ref{4.1})
$$
\|f^{r}_{m-1}-\la (\varphi^{r}_m-G^r_{m-1})\|  
$$
\begin{equation}\label{4.2}
  \le \|f^{r}_{m-1}\|-\la t_m(\|f^{r}_{m-1}\|-b) +2 \|f^{r}_{m-1}\|\rho (\frac{2\la}{\|f^{r}_{m-1}\|})),
\end{equation}
  which proves Lemma \ref{L4.1}.
\end{proof}
Set
$$
a_m:=\|f^r_m\|-b.
$$
Note that 
$$0\le a_m\le 2.$$ 
Using monotonicity of $\rho(u)/u$ we derive from Lemma \ref{L4.1}
\begin{equation}\label{4.3}
a_m\le a_{m-1}\inf_{\la\in[0,1]}(1-\la t_m + 2\rho(2\la/a_{m-1})).
\end{equation}
For $\rho(u)\le \gamma u^q$ it gives
\begin{equation}\label{4.4}
a_m\le a_{m-1}\inf_{\la\in[0,1]}(1-\la t_m + 2\gamma(2\la/a_{m-1})^q).
\end{equation}
Denote $\la_1$   the solution of the equation
$$
\frac{1}{2}\la t_m = 2\gamma \left(\frac{2\la}{a_{m-1}}\right)^q,\quad \la_1=\left(\frac{t_ma_{m-1}^q}{2^{q+2}\gamma}\right)^{\frac{1}{q-1}}.
$$
If $\la_1\le 1$ then 
$$
\inf_{\la\in[0,1]}(1-\la t_m + 2\gamma(2\la/a_{m-1})^q)\le  1-\la_1 t_m + 2\gamma(2\la_1/a_{m-1})^q)
$$
$$
=1-\frac{1}{2}\la_1 t_m =1- C_3(q,\gamma)t_m^pa_{m-1}^{p},\quad p:=\frac{q}{q-1}.
$$
If $\la_1>1$ then for all $\la\le\la_1$ we have 
$$
 \frac{1}{2}\la t_m \ge 2\gamma \left(\frac{2\la}{a_{m-1}}\right)^q.
$$
Specifying $\la=1$ we get
$$
\inf_{\la\in[0,1]}(1-\la t_m + 2\gamma(2\la/a_{m-1})^q)\le1-\frac{1}{2} t_m \le 1-C_4(q,\gamma)t_m^pa_{m-1}^{p}.
$$
Setting $C_5:=C_5(q,\gamma):= \min(C_3(q,\gamma),C_4(q,\gamma))$ we obtain
\begin{equation}\label{4.5}
a_m\le a_{m-1}(1-C_5t_m^pa_{m-1}^{p}).
\end{equation}
It is known (see \cite{Tbook}, p. 345) that inequalities (\ref{4.5}) imply
$$
a_m \le C_6(q,\gamma) \left(1+\sum_{n=1}^m t_n^p\right)^{1/p}.
$$
This completes the proof of Theorem \ref{T4.2}.
\end{proof}

It is known (see, for instance, \cite{DGDS}, Lemma B.1) that in the case $X=L_p$ we have 
$$
\rho(u) \le  u^p/p \quad\text{ if} \quad 1\le p\le 2\quad\text{ and}\quad \rho(u)\le
(p-1)u^2/2 \quad \text{if}\quad 2\le p<\infty.   
$$
Therefore, in this case Theorem \ref{T4.2} gives: for any $f\in L_p$  
\begin{equation}\label{4.6}
\|f^{r,\tau}_m\|_{L_p} \le \inf_{\phi\in A_1(\D)}\|f-\phi\|_{L_p} + C(p)\left(1+\sum_{k=1}^m t_k^s\right)^{-1/s},
\end{equation}
where $s:= \max(\frac{p}{p-1},2)$.
It was proved in \cite{GIY} that for   $0<v\le 1$,
\begin{equation}\label{4.7}
\sigma_m(f,\D)_{L_p} \le \inf_{\phi\in A_v(\D)}\|f-\phi\|_{L_p} + C(p)m^{\max(1/p,1/2)-1/v}.
\end{equation}
The proof in \cite{GIY} is probabilistic and does not provide a deterministic algorithm for constructing a good $m$-term approximation. 
We note that inequality (\ref{4.6}) shows that in case $v=1$ the greedy algorithm WRGA with $\tau=\{t\}$ provides the rate of approximation as in (\ref{4.7}).

\newpage

\end{document}